\numberwithin{equation}{section}
\theoremstyle{plain}
\newtheorem{theorem}{Theorem}[section]
\newtheorem{lemma}[theorem]{Lemma}
\newtheorem{conjecture}[theorem]{Conjecture}
\theoremstyle{definition}
\newtheorem{case[theorem]}{Case}
\newtheorem*{claim*}{Claim}
\newenvironment{subproof}[1][\proofname]{%
  \begin{proof}[#1]%
}{%
  \end{proof}%
}
\theoremstyle{remark}
\numberwithin{equation}{section}
\def\R{ \mathbb{R}}
\def\d{\delta}
\newcommand{\Comment}[1]{}
\newcommand{\rbr}[1]{\left( {#1} \right)}
\newcommand{\cbr}[1]{\left\{ {#1} \right\}}
\def\RR{\mathbb{R}}
\def\NN{\mathbb{N}}
\def\OO{\mathbb{O}}
\def\supp{\text{supp}}
\def\diam{\text{diam}}
\def\stab{\text{Stab}}
\def\orb{\text{Orb}}
\def\d{\textnormal{\textbf{d}}}
\def\g{\textnormal{\textbf{g}}}
\def\h{\textnormal{\textbf{h}}}
\def\boldtheta{\boldsymbol{\theta}}
\def\m{\textnormal{\textbf{m}}}
\def\nug{\nu_{\g}}
\def\OOd{\prod_{i=1}^{\ell}\OO(d_i)}
\begin{document}

\title{Group actions and a multi-parameter Falconer distance problem} 


\author{Kyle Hambrook, Alex Iosevich, and Alex Rice}

\date{\today}

\keywords{Erd\H os-Falconer distance problem, Mattila integral, Group action}
\subjclass[2010]{Primary 42B20; Secondary 28C10, 52C10, 58E40}
\address{Department of Mathematics, University of Rochester, Rochester, NY}
\email{khambroo@ur.rochester.edu, iosevich@math.rochester.edu, arice9@ur.rochester.edu} 
\thanks{This work was partially supported by NSA Grant H98230-15-1-0319 and NSERC}


\begin{abstract} In this paper we study the following multi-parameter variant of the celebrated Falconer distance problem (\cite{Falc86}). Given  $\d=(d_1,d_2, \dots, d_{\ell})\in \mathbb{N}^{\ell}$ with $d_1+d_2+\dots+d_{\ell}=d$ and $E \subseteq \R^d$, we define 
$$ 
\Delta_{\d}(E) = \left\{ \left(|x^{(1)}-y^{(1)}|,\ldots,|x^{(\ell)}-y^{(\ell)}|\right) : x,y \in E \right\} \subseteq \R^{\ell},
$$ where for $x\in \R^d$ we write $x=\left( x^{(1)},\dots, x^{(\ell)} \right)$ with $x^{(i)} \in \R^{d_i}$. 

We ask how large does the Hausdorff dimension of $E$ need to be to ensure that the $\ell$-dimensional Lebesgue measure of $\Delta_{\d}(E)$ is positive? We prove that if $2 \leq d_i$ for $1 \leq i \leq \ell$, then the conclusion holds provided
$$ \dim(E)>d-\frac{\min d_i}{2}+\frac{1}{3}.$$ 

\noindent We also note that, by previous constructions, the conclusion does not in general hold if 
$$\dim(E)<d-\frac{\min d_i}{2}.$$ 
A group action derivation of a suitable Mattila integral plays an important role in the argument. 
\end{abstract}

\maketitle

\setlength{\parskip}{5pt}

\section{Introduction}

Given a set $E \subseteq \RR^d$, the distance set of $E$ is
$$
\Delta(E) = \cbr{|x-y| : x,y \in E} \subseteq \RR.
$$
Falconer \cite{Falc86} studied how large the Hausdorff dimension of $E$ must be to guarantee that the Lebesgue measure of $\Delta(E)$ is positive. 
Falconer's conjecture is 
\begin{conjecture}\label{Falc Conj} 
Let $E$ be a compact subset of $\RR^d$, $d \geq 2$. If $\dim(E) > d/2$, then $|\Delta(E)| > 0$. 
\end{conjecture}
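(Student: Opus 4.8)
\section*{Proof proposal for Conjecture \ref{Falc Conj}}

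The plan is to run the energy-and-Mattila-integral machinery, but aimed squarely at the threshold $s=d/2$ rather than at any easier exponent. Fix $s$ with $d/2 < s < \dim(E)$. Since $E$ is compact with $\dim(E)>s$, Frostman's lemma furnishes a probability measure $\mu$ supported on $E$ with finite $s$-energy,
$$
I_s(\mu) = \iint |x-y|^{-s}\, d\mu(x)\, d\mu(y) = c_{d,s}\int_{\R^d} |\hat\mu(\xi)|^2 |\xi|^{s-d}\, d\xi < \infty.
$$
I would then push $\mu\times\mu$ forward under $(x,y)\mapsto |x-y|$ to obtain a measure $\nu$ on $[0,\infty)$ supported in $\Delta(E)$. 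To conclude $|\Delta(E)|>0$ it suffices to show that $\nu$ is absolutely continuous with an $L^2$ density, which reduces the problem to controlling $\norm{\nu}_{L^2}^2$.

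The first genuine step is to identify $\norm{\nu}_{L^2}^2$, up to a harmless local term, with the Mattila integral
$$
\mathcal{M}_s(\mu) = \int_1^\infty \left(\int_{S^{d-1}} |\hat\mu(t\omega)|^2\, d\sigma(\omega)\right)^2 t^{d-1}\, dt,
$$
so that everything comes down to finiteness of $\mathcal{M}_s(\mu)$. Writing $\sigma_\mu(t)=\int_{S^{d-1}}|\hat\mu(t\omega)|^2\,d\sigma(\omega)$ for the spherical average, the Frostman hypothesis already yields the integrated bound $\int_1^R \sigma_\mu(t)\,t^{d-1}\,dt \lesssim R^{d-s}$. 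Pairing this with a pointwise decay estimate $\sigma_\mu(t)\lesssim t^{-\beta(s)}$ and summing dyadically gives $\mathcal{M}_s(\mu)\lesssim \sum_R R^{\,d-s-\beta(s)}$, which converges exactly when $\beta(s)>d-s$. At the target $s>d/2$ this forces the sharp spherical-average decay $\beta(s)\geq d/2$, i.e. $\sigma_\mu(t)\lesssim t^{-d/2}$ for measures of dimension just above $d/2$; this is the single analytic input that would close the conjecture.

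The heart of the matter is therefore this endpoint decay, and I would attack it with the modern restriction/decoupling toolkit rather than the classical $L^2$ restriction bound, which only delivers $\beta=(d-1)/2$ (hence Falconer's $(d+1)/2$), while the Wolff--Erd\H{o}gan estimate $\beta=(d+2s-2)/4$ reaches only $d/2+1/3$. Concretely: decompose $\hat\mu$ at a dyadic frequency scale $t$ into wave packets adapted to a $t^{-1/2}$-tube cover, and run a density-adapted refined $L^2$ restriction/Strichartz estimate---in the spirit of the refined Strichartz inequalities of Du--Zhang and the good-tube pigeonholing of Guth--Iosevich--Ou--Wang---that exploits the $s$-dimensionality of $\supp(\mu)$ to upgrade the decay to $\beta(s)=d/2$. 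The group-action reformulation of the Mattila integral developed earlier in the paper would be used to linearize the quadratic expression and render the incidence geometry of these tubes tractable.

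The main obstacle is precisely this endpoint restriction estimate. Extracting the full $t^{-d/2}$ decay for $s$ only infinitesimally above $d/2$ requires an essentially lossless incidence bound for tubes meeting an $s$-dimensional set, whereas every currently available estimate loses a fixed power---the very source of the persistent $+\tfrac13$ in general dimension and of the gap between $\tfrac54$ and the conjectured $1$ when $d=2$. I expect the crux to be the contribution of the focusing, highly concentrated tube configurations where the Knapp and bush examples live: a successful plan must control these at the critical scaling without the slack that produces the known losses, and it is exactly here that the argument must be pushed to the endpoint.
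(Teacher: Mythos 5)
This statement is Conjecture \ref{Falc Conj} --- Falconer's conjecture itself --- which the paper explicitly records as open in every dimension $d \geq 2$ and does not prove; its Theorem \ref{main} only reaches the threshold $d - \frac{\min d_i}{2} + \frac{1}{3}$, inheriting the $+\frac13$ from Lemma \ref{wolffest}. So there is no proof in the paper to compare against, and your text is, by its own admission in the final paragraph, a research program rather than a proof: the ``single analytic input'' you need is left unestablished. Your setup is the correct and standard one (Frostman measure with $I_s(\mu)<\infty$, pushforward $\nu$ of $\mu\times\mu$ under the distance map, reduction of $|\Delta(E)|>0$ to $\nu \in L^2$, and the Mattila criterion that pointwise spherical decay $\sigma_\mu(t) \lesssim t^{-\beta}$ with $\beta(s) > d-s$ suffices), and it matches the skeleton the paper uses for its partial result.

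The genuine gap, however, is worse than an open step: the estimate your plan hinges on is \emph{false}. You require $\sigma_\mu(t) = \int_{S^{d-1}} |\widehat{\mu}(t\omega)|^2 \, d\omega \lesssim t^{-\beta}$ with $\beta$ approaching $d/2$ as $s \downarrow d/2$. In the plane, Wolff's exponent $\beta_2(s) = s/2$ for $1 \leq s \leq 2$ (Lemma \ref{wolffest} with $n=2$, from \cite{W99}) is known to be essentially sharp --- see the sharpness examples discussed in Mattila's book \cite{M15} --- so for $s$ slightly above $1$ the true decay is about $t^{-1/2}$, far short of the $t^{-1}$ your dyadic summation demands; since $s/2 < 2-s$ exactly when $s < 4/3$, the symmetric Mattila-integral route has a hard barrier at $4/3$ in the plane (and at $d/2 + 1/3$ via \cite{Erd05} in general). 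No refined Strichartz, decoupling, or tube-incidence improvement can repair a pointwise estimate that has a matching counterexample. Indeed, it is known that the distance measure $\nu$ itself can fail to lie in $L^2$ for measures of dimension below $4/3$, which is precisely why the post-Wolff--Erdo\~{g}an progress you invoke (Guth--Iosevich--Ou--Wang's $5/4$ theorem in the plane, Du--Zhang and successors in higher dimensions) abandons the symmetric quantity $\norm{\nu}_{L^2}^2$ in favor of \emph{pinned} distance sets, Liu's group-action identity, radial projection inputs, and a good/bad wave-packet decomposition of $\mu$ in which the bad part is handled by positivity rather than by decay. In other words, the modern tools you cite are designed to \emph{circumvent} the reduction you are asking them to serve, and as stated your plan cannot close even the planar case of the conjecture.
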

\noindent Here $| \; \cdot \; |$ is the Lebesgue measure and $\dim(\; \cdot \;)$ is the Hausdorff dimension. In \cite{Falc86}, Falconer showed that $d/2$ in the conjecture is best possible by constructing, for each $0 < s < d/2$, a compact set $E_s \subseteq \RR^d$ such that $\dim(E_s) = s$ and $\dim(\Delta(E_s)) \leq 2s/d$. Falconer's conjecture is open for all dimensions $d \geq 2$. Partial results have been obtained by Falconer \cite{Falc86}, Mattila \cite{Mat87}, Bourgain \cite{Bour94}, and others. The best currently known result, due to Wolff \cite{W99} ($d=2$) and Erdo\~{g}an \cite{Erd05} ($d \geq 3$), is
\begin{theorem}\label{Wolff-Erd}
Let $E$ be a compact subset of $\RR^d$, $d \geq 2$. If $\dim(E) > d/2 + 1/3$, then $|\Delta(E)| > 0$.
\end{theorem}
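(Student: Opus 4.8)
The plan is to run Mattila's energy-integral method and reduce the positivity of $|\Delta(E)|$ to a sharp decay estimate for spherical averages of $\widehat{\mu}$, which is where the Wolff--Erdo\u{g}an input enters. First, since $\dim(E) > d/2 + 1/3$, I would fix $s$ with $d/2 + 1/3 < s < \dim(E)$ and invoke Frostman's lemma to produce a Borel probability measure $\mu$ supported on $E$ with finite $s$-energy,
$$ I_s(\mu) = \int\!\!\int |x-y|^{-s}\, d\mu(x)\, d\mu(y) \;\sim\; \int_{\RR^d} |\widehat{\mu}(\xi)|^2 |\xi|^{s-d}\, d\xi \;<\; \infty. $$
The goal then becomes showing that the pushforward $\nu$ of $\mu \times \mu$ under the map $(x,y) \mapsto |x-y|$ is absolutely continuous with an $L^2$ density; since $\nu$ is a nonzero measure supported on $\Delta(E)$, this forces $|\Delta(E)| > 0$.

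Second, I would reduce $\nu \in L^2$ to the finiteness of the Mattila integral
$$ \mathcal{M}(\mu) = \int_1^\infty \Big( \int_{S^{d-1}} |\widehat{\mu}(R\omega)|^2\, d\omega \Big)^2 R^{d-1}\, dR. $$
Writing $\sigma(R) = \int_{S^{d-1}} |\widehat{\mu}(R\omega)|^2\, d\omega$, a direct computation of $\|\nu\|_{L^2}^2$ (expanding $\widehat{\nu}$, passing to polar coordinates, and disposing of the harmless low-frequency contribution using the finiteness of the energy) bounds $\|\nu\|_{L^2}^2$ by $\mathcal{M}(\mu)$ up to finite terms. If one has the decay $\sigma(R) \lesssim R^{-\gamma}$, then the integrand of $\mathcal{M}(\mu)$ is $\lesssim R^{-2\gamma + d - 1}$, so $\mathcal{M}(\mu) < \infty$ precisely when $\gamma > d/2$.

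Third --- and this is the analytic heart of the matter --- I would establish the sharp spherical-average decay
$$ \sigma(R) = \int_{S^{d-1}} |\widehat{\mu}(R\omega)|^2\, d\omega \;\lesssim_\epsilon\; R^{-\gamma(s) + \epsilon}\, I_s(\mu), $$
whose exponent $\gamma(s)$ first exceeds $d/2$ exactly at the threshold $s = d/2 + 1/3$; when $d=2$ the Wolff estimate gives $\gamma(s) = 3s/4$ in the relevant range, so that $\gamma(s) > 1$ iff $s > 4/3$, and Erdo\u{g}an's theorem supplies the analogous exponent for $d \geq 3$. Two soft bounds are available for free: the energy bound $\sigma(R) \lesssim R^{-s} I_s(\mu)$ and the stationary-phase bound $\sigma(R) \lesssim R^{-(d-1)/2} I_{(d-1)/2}(\mu)$ arising from $|\widehat{d\sigma}(\xi)| \lesssim |\xi|^{-(d-1)/2}$ via $\sigma(R) = \int\!\!\int \widehat{d\sigma}(R(x-y))\, d\mu(x)\, d\mu(y)$. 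The former is only a bound on the dyadic average of $\sigma$ and not a pointwise-in-$R$ estimate, so it cannot be inserted into $\mathcal{M}(\mu)$ directly, while the latter is pointwise but yields only the exponent $(d-1)/2 < d/2$; the required pointwise gain beyond $(d-1)/2$ must come from the curvature of the sphere through Fourier restriction theory.

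The main obstacle is exactly this decay estimate. I expect to obtain it by decomposing $\widehat{\mu}$ in a Littlewood--Paley fashion, localizing frequencies to annuli of radius $\sim R$, applying a bilinear extension/restriction estimate for the sphere (of the type proved by Tao) together with a local-smoothing input, and then summing the dyadic pieces while interpolating against the two soft bounds above; the precise bookkeeping that converts the bilinear exponent into the gain yielding $\gamma(s) > d/2$ for $s > d/2 + 1/3$ is the delicate part. With the decay estimate in hand, the first two steps immediately give $\mathcal{M}(\mu) < \infty$, hence $\nu \in L^2$ and $|\Delta(E)| > 0$.
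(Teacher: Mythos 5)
Your overall strategy (Frostman measure, reduction of $|\Delta(E)|>0$ to finiteness of the Mattila integral, spherical-average decay as the key input) is the classical route, and your first two steps are sound; the paper reaches the same Mattila integral, though via a group-action argument (Lemmas \ref{nu to nug} and \ref{nug to mattila}) rather than the direct Fourier computation you sketch. The genuine gap is in your third step: the decay exponent you attribute to Wolff and Erdo\~{g}an is not what they proved, and the mechanism by which you extract the threshold $d/2+1/3$ is wrong. Wolff's theorem in the plane gives $\sigma(R)\lesssim_{\epsilon} R^{-s/2+\epsilon}I_s(\mu)$ --- exponent $s/2$, not $3s/4$ --- and Erdo\~{g}an's theorem gives exponent $\frac{d+2s-2}{4}$ for $\frac d2\le s\le\frac{d+2}2$; this is exactly the paper's Lemma \ref{wolffest}. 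Your mechanism (insert a pointwise bound $\sigma(R)\lesssim R^{-\gamma}$ into $\mathcal{M}(\mu)$ \emph{twice} and demand $\gamma>d/2$) would, with the true exponent, require $\frac{d+2s-2}{4}>\frac d2$, i.e.\ $s>d/2+1$, far above the claimed threshold; at $s$ just above $d/2+1/3$ the true exponent is about $d/2-1/3$, so the squared bound gives $\int_1^\infty R^{-1/3}\,dR=\infty$. The exponent $3s/4$ is reverse-engineered to make the squaring work, but no such estimate is available: in the plane Wolff's exponent $s/2$ is known to be sharp, so the bound you need is false, not merely unproven, in the relevant range $4/3<s<2$.

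The repair is to use the decay estimate \emph{asymmetrically}, pairing one factor of $\sigma(R)$ with the energy instead of squaring. Write
\begin{align*}
\int_1^\infty \sigma(R)^2 R^{d-1}\,dR
= \int_1^\infty \bigl(\sigma(R)R^{d-s}\bigr)\bigl(\sigma(R)R^{s-1}\bigr)\,dR
\leq \Bigl(\sup_{R\geq 1}\sigma(R)R^{d-s}\Bigr)\int_0^\infty \sigma(R)R^{s-1}\,dR,
\end{align*}
and observe that the last integral is comparable to $I_s(\mu)<\infty$ (the energy written in polar coordinates), while Wolff--Erdo\~{g}an gives
$\sigma(R)R^{d-s}\lesssim_{\epsilon} R^{\,d-s-\frac{d+2s-2}{4}+\epsilon}I_s(\mu)$,
whose exponent is nonpositive for small $\epsilon$ precisely when $6s\geq 3d+2+4\epsilon$, i.e.\ when $s>d/2+1/3$ and $\epsilon$ is chosen small. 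This one-sided pairing --- decay lemma on one factor, energy absorbing the other --- is exactly what the paper does in Lemma \ref{est of mattila}, where the (multi-parameter) Mattila integral is bounded by the product of an integral carrying the Wolff--Erdo\~{g}an exponent and an energy integral. With this correction, and citing the Wolff/Erdo\~{g}an estimate as a black box rather than attempting to re-prove it, your outline becomes the standard proof of the theorem.
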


We will study a multi-parameter variant of Falconer's distance problem. Given $\d = (d_1,\dots,d_{\ell}) \in \mathbb{N}^{\ell}$, we let $d = d_1 + \cdots + d_{\ell}$. For $x \in \R^d$, we write 
$$
x = \left(x^{(1)},\ldots,x^{(\ell)}\right)
$$
where $x^{(i)} \in \R^{d_i}$. Given a set $E \subseteq \R^d$, we define the multi-parameter distance set of $E$ to be
$$
\Delta_{\d}(E) = \left\{ \left(|x^{(1)}-y^{(1)}|,\ldots,|x^{(\ell)}-y^{(\ell)}|\right) : x,y \in E \right\} \subseteq \R^{\ell}.
$$ 
Further, we let 
$$
\mathcal{F}(\d)=\sup \left\{ \dim(E) : E\subseteq \R^d, \ |\Delta_{\d}(E)|=0 \right\}. $$ 
\noindent By considering (a sequence of near) maximal dimensional sets with zero-measure distance sets in one hyperplane, crossed with full boxes in the other hyperplanes, we immediately have the relation 
\begin{align*}
\mathcal{F}(\d) \geq d - d_i+\mathcal{F}(d_i)
\end{align*} 
for all $1 \leq i \leq \ell$. 
Moreover, by the construction of Falconer \cite{Falc86} mentioned above, we have $\mathcal{F}(d_i) \geq d_i/2$ for all $1 \leq i \leq \ell$, 
and so 
%
\begin{align*}
\mathcal{F}(\d) \geq d - \frac{\min d_i}{2}. 
\end{align*}
 
Our main result is 

\begin{theorem}\label{main}  Let $\d=(d_1,\dots,d_{\ell}) \in \mathbb{N}^{\ell}$ with $2 \leq d_i$ for $1 \leq i \leq \ell$ and $d=d_1+\cdots+d_{\ell}$. 
If $E$ is a compact subset of $\RR^d$ with 
\begin{align}\label{dim hypothesis}
\dim(E)>d-\frac{\min d_{i}}{2}+\frac{1}{3}, 
\end{align}
then $\left|\Delta_{\d}(E)\right|>0$.

\end{theorem}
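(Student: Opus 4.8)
The plan is to produce a measure on $E$ whose push-forward to the multi-distance set is absolutely continuous, via a group-theoretic version of the Mattila integral. First I would fix $s$ with $d-\frac{\min d_i}{2}+\frac13<s<\dim(E)$ and, by Frostman's lemma, choose a compactly supported probability measure $\mu$ on $E$ with $\mu(B(x,r))\lesssim r^s$, equivalently with finite energy $I_s(\mu)=c\int_{\R^d}\abs{\widehat\mu(\xi)}^2\abs{\xi}^{s-d}\,d\xi<\infty$. Set $m=\min_i d_i$, write $\Phi(x,y)=\rbr{\abs{x^{(1)}-y^{(1)}},\dots,\abs{x^{(\ell)}-y^{(\ell)}}}$, and let $\nu=\Phi_*(\mu\times\mu)$ be the multi-distance measure. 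Since $E$ is compact, $\nu$ is a compactly supported probability measure on $\R^\ell$ with $\supp\nu=\Delta_\d(E)$, so it suffices to prove $\nu\in L^2(\R^\ell)$: then $\nu$ is absolutely continuous with nonzero $L^2$ density, forcing $\abs{\Delta_\d(E)}=\abs{\supp\nu}>0$.

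Next I would derive the relevant Mattila integral from the action of $G=\OOd$, which acts block-diagonally on $\R^d$, preserves $\Phi$, and whose orbits (products of spheres) are parametrized exactly by the distance space $[0,\infty)^\ell$. Expanding $\norm{\nu}_{L^2}^2=\int\!\!\int\!\!\int\!\!\int\prod_j\delta\rbr{\abs{x^{(j)}-y^{(j)}}-\abs{u^{(j)}-v^{(j)}}}\,d\mu^4$ and using in each block the orbit-integration identity $\delta(\abs{z}-\abs{w})=c_{d_j}\abs{w}^{d_j-1}\int_{\OO(d_j)}\delta^{(d_j)}(z-gw)\,dg$ (after the standard regularization of $\mu$), together with $\prod_j\abs{u^{(j)}-v^{(j)}}^{d_j-1}\le\diam(E)^{d-\ell}$, Plancherel, polar coordinates in each block, and averaging over $G$ against Haar measure $\boldtheta$, I would arrive at the bound. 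Writing $\Sigma(r)=\int_{\prod_j S^{d_j-1}}\abs{\widehat\mu(r_1\omega_1,\dots,r_\ell\omega_\ell)}^2\,d\sigma_1(\omega_1)\cdots d\sigma_\ell(\omega_\ell)$ for the product spherical average,
$$\norm{\nu}_{L^2(\R^\ell)}^2\lesssim\int_{(0,\infty)^\ell}\Sigma(r)^2\prod_{j=1}^\ell r_j^{d_j-1}\,dr=\int_{\R^d}\Sigma\rbr{\abs{\xi^{(1)}},\dots,\abs{\xi^{(\ell)}}}\,\abs{\widehat\mu(\xi)}^2\,d\xi,$$
the last equality being polar coordinates, with one factor of $\Sigma$ reinterpreted as the spherical integral of $\abs{\widehat\mu}^2$.

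The analytic heart is to bound $\int_{\R^d}\Sigma\,\abs{\widehat\mu}^2$. For each $j$ and each fixed choice of the remaining frequencies, $\eta\mapsto\widehat\mu(\dots,\eta,\dots)$ is the Fourier transform of the projection to $\R^{d_j}$ of $e^{-2\pi i\sum_{k\ne j}\xi^{(k)}\cdot x^{(k)}}\,d\mu$, a complex measure dominated in total variation by $(\pi_j)_*\mu$, whose Frostman exponent is $s-(d-d_j)$. Since $d_j\ge m$, the hypothesis yields $s-(d-d_j)>\frac{d_j}{2}+\frac13$, so the spherical-averaging estimate underlying Theorem~\ref{Wolff-Erd} applies in $\R^{d_j}$ and gives, uniformly in the frozen frequencies,
$$\int_{S^{d_j-1}}\abs{\widehat\mu(\dots,r_j\omega_j,\dots)}^2\,d\sigma(\omega_j)\lesssim r_j^{-\gamma_j}\qquad\text{with}\qquad \gamma_j>\tfrac{d_j}{2}.$$
Integrating over the other spheres gives $\Sigma(r)\lesssim\abs{\xi^{(j)}}^{-\gamma_j}$ for every $j$. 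Evaluating this at the block $j^\ast$ carrying the largest frequency, where $\abs{\xi^{(j^\ast)}}\ge\abs{\xi}/\sqrt{\ell}$, and putting $\gamma=\min_j\gamma_j>\frac{m}{2}$, I obtain $\Sigma\lesssim\abs{\xi}^{-\gamma}$ for $\abs{\xi}\ge1$. Finally $\gamma>\frac{m}{2}>\frac{m}{2}-\frac13>d-s$, so $\abs{\xi}^{-\gamma}\le\abs{\xi}^{s-d}$ on $\abs{\xi}\ge1$, whence $\int_{\R^d}\Sigma\,\abs{\widehat\mu}^2\lesssim\int_{\R^d}\abs{\xi}^{s-d}\abs{\widehat\mu(\xi)}^2\,d\xi\approx I_s(\mu)<\infty$ (the region $\abs{\xi}\le1$ contributing a bounded amount), so $\nu\in L^2(\R^\ell)$ and the theorem follows.

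The hard part will be the uniform, block-by-block application of Wolff--Erdo\~{g}an in the middle step: one must know that the spherical-averaging bound depends only on the Frostman constant of the dominating projection $(\pi_j)_*\mu$ (hence is uniform in the frozen frequencies) and persists for the complex modulated measure. Granting this, the decisive structural point is that the factor $\abs{\widehat\mu}^2$ should be kept intact and matched against the energy $I_s(\mu)$, while only a \emph{single} block---the one carrying the largest frequency---is asked to supply decay. This is what avoids the dimension over-counting that a naive product-type decay estimate for $\Sigma$ would incur, and it is precisely here that the threshold $d-\frac{m}{2}+\frac13$, rather than a larger one, enters, through the chain $\gamma>\frac{m}{2}>d-s$.
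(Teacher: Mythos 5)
Your overall architecture matches the paper's: a group-action/orbital-integration derivation of the multi-parameter Mattila integral (your delta-function computation is the formal version of what the paper proves rigorously in Lemmas \ref{nu to nug} and \ref{nug to mattila} via approximate identities and a bounded-overlap covering of the orbit), followed by a blockwise application of the Wolff--Erdo\~{g}an bound to exactly the same complex ``Fourier slice'' measures $\lambda_j$ that the paper uses in Lemma \ref{est of mattila}. Even the issue you flag as ``the hard part'' (validity of Lemma \ref{wolffest} for the complex modulated slices, with energies controlled through $|\lambda_j|\le(\pi_j)_*\mu$) is shared by the paper's own argument, so that is not where your proposal diverges. Your endgame is a genuine variant: you seek a \emph{uniform} pointwise decay of the product spherical average $\Sigma$, extracted from the single block carrying the largest frequency, and match it against one energy $I_s(\mu)$; the paper instead iterates Lemma \ref{wolffest} in every block on both factors and controls the two resulting product-weighted energy integrals by Lemma \ref{frost} and dyadic summation.

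However, there is a genuine error at your decisive quantitative step. You claim that the Frostman exponent $s-(d-d_j)>\tfrac{d_j}{2}+\tfrac13$ yields spherical decay $r_j^{-\gamma_j}$ with $\gamma_j>\tfrac{d_j}{2}$. This is false: Lemma \ref{wolffest} gives decay exponent $\tfrac{d_j+2\alpha_j-2}{4}-\epsilon$ only for $\alpha_j\le\tfrac{d_j+2}{2}$, and $\alpha_j$ must also stay below $s-(d-d_j)$ for the slice energies $I_{\alpha_j}(\lambda_j)\le I_{\alpha_j}\bigl((\pi_j)_*\mu\bigr)$ to be finite; hence the decay can never exceed $\tfrac{d_j}{2}$, and with Frostman exponent $\tfrac{d_j}{2}+\tfrac13+\delta$ it is only about $\tfrac{d_j}{2}-\tfrac13+\tfrac{\delta}{2}$. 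Indeed, if your chain $\gamma>\tfrac{m}{2}>\tfrac{m}{2}-\tfrac13>d-s$ were available, the theorem would already follow from $\dim(E)>d-\tfrac{m}{2}$, i.e.\ you would have proved the conjecturally sharp threshold, which Wolff--Erdo\~{g}an cannot deliver -- that should have been a red flag. The good news is that your architecture survives the correct bookkeeping: what you actually need is $\gamma_j>d-s$ for every $j$, and with $\alpha_j=\min\bigl\{s-(d-d_j),\tfrac{d_j+2}{2}\bigr\}-\epsilon$ the binding case reads
\begin{equation*}
\frac{3d_j+2s-2d-2}{4}>d-s
\quad\Longleftrightarrow\quad
s>d-\frac{d_j}{2}+\frac{1}{3},
\end{equation*}
which is precisely hypothesis \eqref{dim hypothesis} (the case $\alpha_j=\tfrac{d_j+2}{2}$ being easier). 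This is the same exponent arithmetic as in the paper's proof of Lemma \ref{est of mattila}, with its choice $\alpha_i=\min\{s-(d-d_i)-2\epsilon,\tfrac{d_i+2}{2}\}$. So the proposal is repairable, but as written its central inequality is wrong, and the mechanism you offer for why the $\tfrac13$ appears is not the actual one.
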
 

\noindent In other words, Theorem \ref{main} is precisely the statement that 
$$
\mathcal{F}(\d)\leq d-\frac{\min d_{i}}{2}+\frac{1}{3}. 
$$  
%

Note that Theorem \ref{main} implies Theorem \ref{Wolff-Erd} by taking $\ell=1$. Note also that a similar problem has been studied in vector spaces over finite fields by Birklbauer and Iosevich \cite{PI}. 

The standard approach in studying Falconer's distance conjecture and related problems is to reduce the problem to the convergence of a so-called Mattila integral. This reduction is typically carried out via a stationary phase argument 
(see, for example, \cite{Bour94}, \cite{Erd05}, \cite{Mat87}, \cite{M15},  \cite{W99}, and references therein). 
Our approach is notable in that we instead carry out this reduction via the group action method developed by Greenleaf, Iosevich, Liu, and Palsson \cite{GILP15} in the study of the distribution of simplexes in compact sets of a given Hausdorff dimension. The method has its roots in the method developed by Elekes and Sharir in \cite{ES11}, which was ultimately used by Guth and Katz \cite{GK15} to prove the Erd\H os distance conjecture in the plane.

\section{Proof of Theorem \ref{main}} 

For the entirety of the proof, 
we fix $\d = (d_1,\dots,d_{\ell})\in \mathbb{N}^{\ell}$ with $2 \leq d_i$ for $1 \leq i \leq \ell$ and   
$
d=d_1+\cdots+d_{\ell}.
$    
We also fix a compact set $E \subseteq \RR^d$. 

The notation $A \lesssim B$ means there is a constant $C > 0$ such that $A \leq C B$; the constant may depend on $(d_1,\ldots,d_{\ell})$ and $E$, but not on any other parameters. Additionally, $A \gtrsim B$ means $B \lesssim A$, and $A \approx B$ means both $A \lesssim B$ and $B \lesssim A$. For $n \in \NN$, we let $\OO(n)$ denote the orthogonal group on $\RR^n$, and we note that $\OO(n)$ is a compact group with the operator norm topology.  

For each finite non-negative Borel measure $\mu$ supported on $E$, we define a measure $\nu$ on $\RR^{\ell}$ by 
$$
\int_{\RR^{\ell}} f(t) d\nu(t) = \int_{\RR^d} \int_{\RR^d} f(|x^{(1)}-y^{(1)}|,\ldots,|x^{(\ell)}-y^{(\ell)}|) d\mu(x) d\mu(y), 
$$
and, further, for each $\g = (g^{(1)},\ldots,g^{(\ell)}) \in \OOd$, we define a measure $\nug$ on $\RR^d$ by 
\begin{align*}
\int_{\RR^d} f(z) d\nug(z) = \int_{\RR^d} \int_{\RR^d} f(x^{(1)}-g^{(1)}y^{(1)},\ldots,x^{(\ell)}-g^{(\ell)}y^{(\ell)}) d\mu(x) d\mu(y).
\end{align*} 
We emphasize 
that $\nu$ and $\nu_\g$ both depend on $\mu$  
and that 
$
\text{supp}(\nu)\subseteq \Delta_{\d}(E).
$

Our goal is to show that, whenever \eqref{dim hypothesis} holds, 
there is a choice of $\mu$ for which the Fourier transform $\widehat{\nu}$ is in $L^2$. This will imply $\nu$ has an $L^2$ density with respect to Lebesgue measure on $\RR^{\ell}$, and hence $\left|\Delta_{\d}(E)\right|>0$. 

Our argument has two parts. In the first part, we exploit the action of the orthogonal group to show that, for any measure $\mu$ as above,  
$$
\int_{\RR^{\ell}} |\widehat{\nu}(\eta)|^2 d\eta 
\lesssim  
\int_{\RR^d} \int_{\OOd}  |\widehat{\nug}(\xi)|^2 d\g d\xi
\approx 
 \int_{\RR^d} |\widehat{\mu}(\xi)|^2 \int_{\prod_{i=1}^{\ell} S^{d_i - 1}}  |\widehat{\mu}(|\xi^{(1)}|\theta^{(1)},\ldots,|\xi^{(\ell)}|\theta^{(\ell)})|^2 d\boldtheta d\xi.
$$
This is split into Lemma \ref{nu to nug} and Lemma \ref{nug to mattila}.   
Here $d\g=dg^{(1)} \cdots dg^{(\ell)}$ is the product of the normalized Haar measures on $\OO(d_i)$, $i=1,\ldots,\ell$, and $d\boldtheta=d\theta^{(1)} \cdots d\theta^{(\ell)}$ is the product of the uniform probability measures on the spheres $S^{d_i - 1}$, $i=1,\ldots,\ell$.

In the second part of the argument, we use a slicing technique and a bound due to Wolff \cite{W99} ($n=2$) and Erdo\~{g}an \cite{Erd05} ($n \geq 3$) on the $L^2$ spherical average of the Fourier transform of a measure on $\RR^n$ to 
show that the multi-parameter Mattila integral  
$$
\int_{\RR^d}  |\widehat{\mu}(\xi)|^2 \int_{\prod_{i=1}^{\ell} S^{d_i - 1}}  |\widehat{\mu}(|\xi^{(1)}|\theta^{(1)},\ldots,|\xi^{(\ell)}|\theta^{(\ell)})|^2 d\boldtheta d\xi 
$$
is finite 
for some 
Frostman measure    
$\mu$ 
on $E$ whose existence is implied by the dimension hypothesis \eqref{dim hypothesis}. This is Lemma \ref{est of mattila}.

\subsection{Exploiting the Action of the Orthogonal Group}

\begin{lemma}\label{nu to nug} 
For any finite non-negative Borel measure $\mu$ supported on $E$,  
\begin{align*}
\int_{\RR^{\ell}} |\widehat{\nu}(\eta)|^2 d\eta \lesssim  \int_{\RR^d} \int_{\OOd}   |\widehat{\nug}(\xi)|^2 d\g d\xi.
\end{align*}
\end{lemma}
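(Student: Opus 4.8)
The plan is to evaluate both $L^2$ integrals by expanding the modulus squared and applying Fourier inversion, which turns each side into a fourfold integral against $d\mu(x)\,d\mu(y)\,d\mu(x')\,d\mu(y')$ carrying a singular (Dirac-type) constraint. Every integrand in sight is nonnegative, so all of these integrals are well defined in $[0,\infty]$, and the formal $\delta$-function manipulations can be justified rigorously by inserting a Gaussian cutoff $e^{-\pi\e^2|\cdot|^2}$, recognizing its inverse Fourier transform as an approximate identity, and passing to the limit $\e\to0$ by monotone convergence. I would perform this regularization once and then reason with the resulting constrained integrals.

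For the left-hand side, writing $\widehat{\nu}(\eta)=\int\int e^{-2\pi i\,\eta\cdot(|x^{(1)}-y^{(1)}|,\dots,|x^{(\ell)}-y^{(\ell)}|)}\,d\mu(x)\,d\mu(y)$ and integrating $|\widehat{\nu}|^2$ over $\eta\in\RR^{\ell}$ would produce
\[
\int_{\RR^{\ell}}|\widehat{\nu}(\eta)|^2\,d\eta=\int\!\!\int\!\!\int\!\!\int\ \prod_{j=1}^{\ell}\delta\!\left(|x^{(j)}-y^{(j)}|-|{x'}^{(j)}-{y'}^{(j)}|\right)d\mu(x)\,d\mu(y)\,d\mu(x')\,d\mu(y'),
\]
an unweighted count of quadruples whose multi-distances agree in every coordinate.

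For the right-hand side I would integrate in $\g$ first. Expanding $|\widehat{\nug}(\xi)|^2$ and using that $\int_{\OO(d_j)}e^{2\pi i\,\xi^{(j)}\cdot g^{(j)}w^{(j)}}\,dg^{(j)}$ is precisely the Fourier transform of the uniform measure on the sphere of radius $|w^{(j)}|$ in $\RR^{d_j}$ — this is where transitivity of $\OO(d_j)$ on spheres enters — and then integrating in $\xi\in\RR^{d}$, I would obtain
\[
\int_{\RR^{d}}\int_{\OOd}|\widehat{\nug}(\xi)|^2\,d\g\,d\xi=\int\!\!\int\!\!\int\!\!\int\ \prod_{j=1}^{\ell}\frac{\delta\!\left(|x^{(j)}-{x'}^{(j)}|-|y^{(j)}-{y'}^{(j)}|\right)}{|S^{d_j-1}|\,|y^{(j)}-{y'}^{(j)}|^{d_j-1}}\,d\mu(x)\,d\mu(y)\,d\mu(x')\,d\mu(y').
\]

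The two fourfold integrals impose different-looking constraints, but they coincide after relabeling the dummy variables: swapping $y\leftrightarrow x'$, which is legitimate because $d\mu(x)\,d\mu(y)\,d\mu(x')\,d\mu(y')$ is symmetric, converts the constraint $|x^{(j)}-y^{(j)}|=|{x'}^{(j)}-{y'}^{(j)}|$ into $|x^{(j)}-{x'}^{(j)}|=|y^{(j)}-{y'}^{(j)}|$. Hence both sides count the same constrained quadruples and differ only by the weight $\prod_{j=1}^{\ell}|S^{d_j-1}|^{-1}|y^{(j)}-{y'}^{(j)}|^{-(d_j-1)}$, and here compactness of $E$ finishes the argument: on the support of the constraint we have $|y^{(j)}-{y'}^{(j)}|\le\diam(E)$, and since $d_j-1\ge1$ the weight is bounded below by $\prod_{j=1}^{\ell}\big(|S^{d_j-1}|\,\diam(E)^{\,d_j-1}\big)^{-1}$, a positive constant depending only on $\d$ and $E$. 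This gives $\int_{\RR^{\ell}}|\widehat{\nu}|^2\lesssim\int_{\RR^{d}}\int_{\OOd}|\widehat{\nug}|^2\,d\g\,d\xi$. The main obstacle is not the algebra but the rigorous treatment of the singular constraints; I expect the real work to be the mollification argument justifying the two displayed identities, together with the observation that the blow-up of the sphere-density weight as $|y^{(j)}-{y'}^{(j)}|\to0$ only strengthens the desired lower bound and so causes no difficulty.
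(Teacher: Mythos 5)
Your formal computation is accurate throughout: both delta identities have the right shape, the relabeling $y\leftrightarrow x'$ is legitimate because $\mu^4$ is a product of identical measures, the inequality points the right way, and you correctly note that the blow-up of the weight as $|y^{(j)}-{y'}^{(j)}|\to 0$ only helps. The genuine gap is structural: you pass to the limit $\epsilon\to 0$ \emph{first}, obtaining the two delta expressions, and only then compare them. For an arbitrary finite Borel measure $\mu$ those two displayed identities cannot be justified: monotone convergence does show that each side of the lemma is the increasing limit of its Gaussian-regularized version, but nothing guarantees that these limits are representable as integrals of a density against $\mu^4$ over the constraint set --- no coarea or disintegration formula is available for a general $\mu$, and both limits may well be $+\infty$. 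Comparing the ``integrands'' of two formal delta integrals is therefore not a proof. The comparison has to be carried out at the mollified level, uniformly in $\epsilon$, and only then passed to the limit. Concretely, after your relabeling, what must actually be proved is the pointwise bound: for $z=x^{(j)}-y^{(j)}$ and $w={x'}^{(j)}-{y'}^{(j)}$ with $|z|,|w|\le \diam(E)$, and for $0<\epsilon\le 1$,
\[
\epsilon^{-1}e^{-\pi\left(|z|-|w|\right)^{2}/\epsilon^{2}}
\;\lesssim\;
\epsilon^{-d_j}\int_{\OO(d_j)}e^{-\pi|z-gw|^{2}/\epsilon^{2}}\,dg,
\]
with implicit constant depending only on $d_j$ and $\diam(E)$; multiplying over $j$, integrating $d\mu^4$, and letting $\epsilon\to 0$ then gives the lemma.

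This missing estimate is true, and proving it is where the geometry you folded into the weight actually lives: writing $|z-gw|^{2}=\left(|z|-|w|\right)^{2}+2|z||w|(1-\cos\alpha)$, where $\alpha$ is the angle between $z$ and $gw$, the right-hand side above is at least $e^{-\pi}\epsilon^{-d_j}e^{-\pi(|z|-|w|)^{2}/\epsilon^{2}}$ times the Haar measure of $\{g : \alpha\le \epsilon/\sqrt{|z||w|}\}$, which (pushing Haar measure forward to the sphere) is a cap of measure $\gtrsim \min\{1,\epsilon/\sqrt{|z||w|}\}^{d_j-1}\gtrsim_{d_j,\diam(E)}\epsilon^{d_j-1}$ since $|z||w|\le\diam(E)^2$ and $\epsilon\le 1$. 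The factor $\epsilon^{-(d_j-1)}$ cancels and compactness of $E$ supplies the constant; your lower bound on the weight $|S^{d_j-1}|^{-1}|y^{(j)}-{y'}^{(j)}|^{-(d_j-1)}$ is exactly the formal shadow of this cap-measure estimate. Once this is inserted your argument is complete, and it is organized genuinely differently from the paper's: you average over the group on the Fourier side and identify spherical measures, whereas the paper works on the physical side with the thickened sets $D(\epsilon)$ and $G(\epsilon,\g)$ and proves the corresponding step \eqref{2 of 3} by covering each sphere with $\approx\epsilon^{-(d_j-1)}$ caps, pigeonholing, and a conjugation trick. The underlying geometric fact --- that the set of rotations carrying $w$ to within $O(\epsilon)$ of $z$ has Haar measure $\gtrsim_{\diam(E)}\epsilon^{d_j-1}$ whenever $\bigl||z|-|w|\bigr|\lesssim\epsilon$ --- is the same in both, but your route dispenses with the covering and pigeonhole bookkeeping at the cost of one explicit spherical integral estimate.
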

\begin{proof}
We begin by fixing approximate identities on $\RR^{\ell}$ and $\RR^{d}$ as follows. We choose $\phi \in C_c^{\infty}(\RR^{\ell})$ with $\phi \geq 0$, $\supp(\phi) \subseteq [-1,1]^{\ell}$, and $\int \phi(x) dx = 1$, and the associated approximate identity is $\phi_{\epsilon}(x)=\epsilon^{-\ell}\phi(\epsilon^{-1}x)$ for $\epsilon > 0$. Similarly, we choose $\psi \in C_c^{\infty}(\RR^{d})$ with $\psi \geq 0$, $\supp(\phi) \subseteq [-1,1]^{d}$, $\int \psi(x) dx = 1$, and $\psi \geq \frac{1}{2}$ on $[-\frac{1}{2},\frac{1}{2}]^d$, and the associated approximate identity is $\psi_{\epsilon}(x)=\epsilon^{-d}\psi(\epsilon^{-1}x)$ for $\epsilon > 0$. 

\noindent Since $\widehat{\phi_{\epsilon} \ast \nu} \to \widehat{\nu}$ and $\widehat{\psi_{\epsilon} \ast \nug} \to \widehat{\nug}$ uniformly as $\epsilon \to 0$, Plancherel's theorem tells us that Lemma \ref{nu to nug} will be proved upon establishing that, for all $\epsilon > 0$, 
\begin{align}\label{nu to nug approx 1}
\int_{\RR^{\ell}} (\phi_{\epsilon} \ast \nu)^2(t) dt \lesssim  \int_{\RR^d} \int_{\OOd} (\psi_{c\epsilon} \ast \nug)^2(z) d\g dz, 
\end{align}
where $c > 0$ is a constant 
depending only on the diameter of $E$.

\noindent For $\epsilon > 0$ and $\g \in \OOd$, we define the sets 
\begin{align*}
D(\epsilon) 
&= 
\cbr{(u,v,x,y) \in E^{4}: \left||x^{(i)}-y^{(i)}|-|u^{(i)}-v^{(i)}|\right| \leq \epsilon \quad \forall \ 1 \leq i \leq \ell }, 
\\
G(\epsilon,\g)
&= 
\cbr{ (u,v,x,y) \in E^{4}: |x^{(i)} - y^{(i)} - g^{(i)}(u^{(i)} -  v^{(i)})| \leq \epsilon \quad  \forall \  1 \leq i \leq \ell }.  
\end{align*}
We will establish \eqref{nu to nug approx 1} by proving the following three inequalities:
\begin{align}
\label{1 of 3}
\int_{\RR^{\ell}} (\phi_{\epsilon} \ast \nu)^2(t) dt
&\lesssim 
\epsilon^{-\ell} \mu^4(D({2\epsilon})),
\\
\label{2 of 3}
\epsilon^{-\ell} \mu^4(D(\epsilon)) 
&\lesssim 
\epsilon^{-d} \int_{\OOd} \mu^4(G(c \epsilon,\g)) d\g,
\\
\label{3 of 3}
\epsilon^{-d} \mu^4(G({\epsilon/4,\g})) 
&\lesssim 
\int_{\RR^d} (\psi_{\epsilon} \ast \nug)^2(z) dz,
\end{align}
where $\mu^4$ denotes the product measure $\mu \times \mu \times \mu \times \mu$, and $c = 2\max\cbr{2\text{diam}(E),1}$ in \eqref{2 of 3}.

\noindent 
We start by proving \eqref{1 of 3}. 

\noindent
For $t \in \RR^{\ell}$, we have 
\begin{align*}
\phi_{\epsilon} \ast \nu(t) 
&= \int_{\RR^d} \int_{\RR^d} \phi_{\epsilon}\left(t_1-|x^{(1)}-y^{(1)}|,\ldots,t_{\ell}-|x^{(\ell)}-y^{(\ell)}|\right) d\mu(x) d\mu(y) \\
&\lesssim \int_{\RR^d} \int_{\RR^d} \epsilon^{-\ell} \prod_{i=1}^{\ell} \chi\cbr{\left|t_i-|x^{(i)}-y^{(i)}|\right| \leq \epsilon} d\mu(x) d\mu(y),
\end{align*}
where $\chi A$ denotes the indicator function of a set $A$. 
Therefore, by the triangle inequality, 
\begin{gather*}
\int_{\RR^{\ell}} (\phi_{\epsilon} \ast \nu)^2(t) dt 
\\
\lesssim 
\epsilon^{-2\ell} \int \prod_{i=1}^{\ell} 
\chi\cbr{\left|t_i-|x^{(i)}-y^{(i)}|\right| \leq \epsilon} 
\chi\cbr{\left|t_i-|u^{(i)}-v^{(i)}|\right| \leq \epsilon} 
d\mu^4(u,v,x,y) dt 
\\
\leq 
\epsilon^{-2\ell} \int \prod_{i=1}^{\ell} 
\chi\cbr{\left|t_i-|x^{(i)}-y^{(i)}|\right| \leq \epsilon} 
\chi\cbr{\left||x^{(i)}-y^{(i)}|-|u^{(i)}-v^{(i)}|\right| \leq 2\epsilon}
d\mu^4(u,v,x,y) dt
\end{gather*}
For fixed $x^{(i)},y^{(i)} \in \RR^{d_i}$, the set of $t_i \in \RR$ with $\left|t_i-|x^{(i)}-y^{(i)}|\right| \leq \epsilon$ has Lebesgue measure $\approx \epsilon$. 
Thus integrating out $dt$ in the last integral yields \eqref{1 of 3}.

\noindent Now we prove \eqref{3 of 3}. 

\noindent 
Our choice of $\psi$ guarantees that $\psi_{\epsilon} \geq \frac{1}{2} \epsilon^{-d}$ on $[-\frac{1}{2}\epsilon,\frac{1}{2}\epsilon]^d$. 
Thus, for all $z \in \RR^d$, 
\begin{align*}
\psi_{\epsilon} \ast \nug(z) 
&= \int_{\RR^d} \int_{\RR^d} \psi_{\epsilon}(z^{(1)}-(x^{(1)}-g^{(1)}y^{(1)}),\ldots,z^{(\ell)}-(x^{(\ell)}-g^{(\ell)}y^{(\ell)})) d\mu(x) d\mu(y) \\
&\gtrsim \int_{\RR^d} \int_{\RR^d} \epsilon^{-d} \prod_{i=1}^{\ell} \chi\cbr{|z^{(i)}-(x^{(i)}-g^{(i)}y^{(i)})| \leq \frac{\epsilon}{2}} d\mu(x) d\mu(y).
\end{align*}
Therefore, by the triangle inequality, 
\begin{gather*}
\int_{\RR^d} (\psi_{\epsilon} \ast \nug)^2(z) dz \\
\gtrsim  
\epsilon^{-2d} \int \prod_{i=1}^{\ell} 
\chi\cbr{|z^{(i)}-(x^{(i)}-g^{(i)}u^{(i)})| \leq \frac{\epsilon}{2}} 
\chi\cbr{|z^{(i)}-(y^{(i)}-g^{(i)}v^{(i)})| \leq \frac{\epsilon}{2}} 
d\mu^4(u,v,x,y) dz
\\
\geq    
\epsilon^{-2d} \int   \prod_{i=1}^{\ell} 
\chi\cbr{|z^{(i)}-(x^{(i)}-g^{(i)}u^{(i)})| \leq \frac{\epsilon}{4}}
\chi\cbr{|(x^{(i)}-g^{(i)}u^{(i)})-(y^{(i)}-g^{(i)}v^{(i)})| \leq \frac{\epsilon}{4}} 
d\mu^4(u,v,x,y) dz
\\
=   
\epsilon^{-2d} \int   \prod_{i=1}^{\ell} 
\chi\cbr{|z^{(i)}-(x^{(i)}-g^{(i)}u^{(i)})| \leq \frac{\epsilon}{4}}
\chi\cbr{|x^{(i)}-y^{(i)}-g^{(i)}(u^{(i)}-v^{(i)})| \leq \frac{\epsilon}{4}} 
d\mu^4(u,v,x,y) dz.
\end{gather*}
For fixed $x^{(i)},u^{(i)} \in \RR^{d_i}$ and $g^{(i)} \in \OO(d_i)$, the set of $z^{(i)} \in \RR^{d_i}$ with $|z^{(i)}-(x^{(i)}-g^{(i)}u^{(i)})| \leq {\epsilon}/{4}$ has Lebesgue measure $\approx \epsilon^{d_i}$. 
Thus integrating out $dz$ in the last integral yields \eqref{3 of 3}.


\noindent Finally we prove \eqref{2 of 3}. 

\noindent 
Consider a fixed $1 \leq i \leq \ell$. 
For the action of $\OO(d_i)$ on $\RR^{d_i}$, the orbit of $e_{d_i}$ is $\orb(e_{d_i}) = \cbr{ge_{d_i} : g \in \OO(d_i)} = S^{d_i-1}$.   
We view the sphere $S^{d_i - 1}$ as a metric space with the Euclidean metric from $\RR^{d_i}$. 
We fix a cover of $S^{d_i - 1}$ by balls of radius $\epsilon$ such that the number of balls in the cover is $N(\epsilon,i) \approx \epsilon^{-(d_i - 1)}$  
and such that the cover has bounded overlap (that is, each set in the cover intersects no more than $C$ other sets in the cover, where $C$ is a constant independent of $\epsilon$). 
We let $T_{m_i}^{(i)}$ for $m_i=1,\ldots,N(\epsilon,i)$ denote the preimages of the balls with respect to the orbit map $g \mapsto ge_{d_i}$ from $\OO(d)$ to $S^{d_i - 1}$. 
Of course, the cover $\cbr{T_{m_i}^{(i)} : 1 \leq m_i \leq N(\epsilon,i)}$ of $\OO(d_i)$ also has bounded overlap. 
Moreover, since the image of the Haar measure on $\OO(d_i)$ with respect to the orbit map is exactly the uniform probability measure on $S^{d_i-1}$, each $T_{m_i}^{(i)}$ has measure $\approx \epsilon^{d_i - 1}$.

\noindent 
For each non-zero $w \in \RR^{d_i}$, we define the conjugation 
(change of basis) 
map $\zeta_{w}:\OO(d_i) \rightarrow \OO(d_i)$ by $\zeta_{w}(g) = pgp^{-1}$, where $p$ is a fixed but arbitrary transformation in $\OO(d_i)$ such that $p e_{d_i} = w/|w|$. 
For each $\epsilon > 0$ and $\g \in \OOd$, we define 
\begin{align*}
M(\epsilon) &= \cbr{ (m_1,\ldots,m_{\ell}) \in \NN^{\ell} : 1 \leq m_i \leq N(\epsilon,i) \quad \forall 1 \leq i \leq \ell }, 
\\
G'(\epsilon,\g) 
&= 
\left\{ 
(u,v,x,y) \in E^4 : |(x^{(i)} - y^{(i)}) - \zeta_{u^{(i)}-v^{(i)}}(g^{(i)}) (u^{(i)}-v^{(i)})| \leq \epsilon 
\quad \forall 1 \leq i \leq \ell
\right\}.
\end{align*}
\begin{claim*}
For any collection of transformations $g_{m_i}^{(i)} \in T_{m_i}^{(i)}$, $1 \leq i \leq \ell$, $1 \leq m_i \leq N(\epsilon,i)$, we have  
\begin{align*}
D(\epsilon) 
\subseteq 
\bigcup_{m \in M(\epsilon)}
G'(c\epsilon,\g_{m}),
\end{align*}
where $c=2\max\cbr{2\diam(E),1}$ and $\g_{m}=(g_{m_1}^{(1)},\ldots,g_{m_{\ell}}^{(\ell)})$. 
\end{claim*}
\begin{subproof}[Proof of Claim]
Let $u,v,x,y \in E$. It suffices to consider a fixed $1 \leq i \leq \ell$. 
Let $w = u^{(i)} - v^{(i)}$ and $z = x^{(i)} - y^{(i)}$. 
Assume $||z|-|w|| < \epsilon$. 
If $w=0$ or $z=0$, then $|z-gw| = ||z|-|w||< \epsilon$ for all $g \in \OO(d_i)$, and we are done. 
Assume $w$ and $z$ are non-zero. 
Choose $g \in \OO(d_i)$ such that $g(w/|w|) = z/|z|$, and hence $|z-gw|=||z|-|w|| < \epsilon$. 
Define $g_0 = \zeta_{w}^{-1}(g)$. 
We know $g_0 \in T_{m_i}^{(i)}$ for some $1 \leq m_i \leq N(\epsilon,i)$. 
Since $g_{m_i}^{(i)} \in T_{m_i}^{(i)}$ also, we have $|g_0e_{d_i} - g_{m_i}^{(i)}e_{d_i}| < 2\epsilon$. 
By the definition of $\zeta_{w}$, the previous inequality is equivalent to $|gw - \zeta_w(g_{m_i}^{(i)})w| < 2|w|\epsilon$. 
Therefore, by the triangle inequality,    
$|z - \zeta_w(g_{m_i}^{(i)})w| \leq \epsilon + 2|w|\epsilon \leq 2\max\cbr{2|w|,1}\epsilon$.  
To conclude, we note that $|w|=|u^{(i)}-v^{(i)}| \leq |u-v| \leq \diam(E)$.
\end{subproof}
\noindent 
For each $m \in M(\epsilon)$, we choose $\g_m = (g_{m_1}^{(1)},\ldots,g_{m_{\ell}}^{(\ell)}) \in \prod_{i=1}^{\ell} T_{m_i}^{(i)}$ such that 
\begin{align*}
\mu^4( G'(c\epsilon,\g_{\m}) )  \lesssim 
\epsilon^{-(d_1 - 1)} \cdots \epsilon^{-(d_{\ell} - 1)}
\int_{\prod_{i=1}^{\ell} T_{m_i}^{\epsilon,i}} 
\mu^4(G'(c\epsilon,\g)) \, d\g.
\end{align*}
Such a choice is possible because the average of a set must be larger than at least one element of the set.   
Then, using that 
$
\epsilon^{-\ell} \epsilon^{-(d_1 - 1)} \cdots \epsilon^{-(d_{\ell} - 1)} = \epsilon^{-d}
$, 
the claim implies 
\begin{align*}
\epsilon^{-\ell} \mu^4(D(\epsilon)) 
\leq 
\epsilon^{-\ell} 
\sum_{m \in M(\epsilon)} 
\mu^4(  G'(c\epsilon,\g_{\m})   )
\lesssim 
\epsilon^{-d} \sum_{\m \in M(\epsilon)} 
\int_{\prod_{i=1}^{\ell} T_{m_i}^{\epsilon,i}}
\mu^4(G'(c\epsilon,\g)) \, d\g.
\end{align*}
Expanding things out, the integral on the right equals 
\begin{gather*}
\int_{\prod_{i=1}^{\ell} T_{m_i}^{\epsilon,i}} 
\int_{E^{4}}
\chi \left\{ |(x^{(i)} - y^{(i)}) - \zeta_{u^{(i)}-v^{(i)}}(g^{(i)})  (u^{(i)}-v^{(i)})| \leq c\epsilon 
\quad \forall 1 \leq i \leq \ell
\right\} 
d\mu^4(u,v,x,y)
 d\g 
\\
= 
\int_{E^{4}}
\int_{\prod_{i=1}^{\ell} \zeta_{u^{(i)}-v^{(i)}} ( T_{m_i}^{\epsilon,i} )  } 
\chi \left\{ |(x^{(i)} - y^{(i)}) - g^{(i)} (u^{(i)}-v^{(i)}) | \leq c\epsilon 
\quad \forall 1 \leq i \leq \ell
\right\} 
 d\g 
d\mu^4(u,v,x,y).
\end{gather*} 
Thus, noting that $\cbr{\zeta_{w}(T_{m_i}^{(i)}) : 1 \leq m_i \leq N(\epsilon,i)}$ is a bounded overlap cover of $\OO(d_i)$ for each non-zero $w \in \RR^{d_i}$, we obtain 
\begin{align*}
\epsilon^{-\ell} \mu^4(D(\epsilon))
&\lesssim  
\epsilon^{-d} 
\int_{E^{4}}
\int_{\prod_{i=1}^{\ell} \OO(d_i) } 
\chi \left\{ |(x^{(i)} - y^{(i)}) - g^{(i)} (u^{(i)}-v^{(i)}) | \leq c\epsilon 
\quad \forall 1 \leq i \leq \ell
\right\} 
 d\g 
d\mu^4(u,v,x,y) 
\\
&= \epsilon^{-d} \int_{\prod_{i=1}^{\ell} \OO(d_i) } \mu^4( G(c\epsilon,\g)  ) d\g.
\end{align*}
\end{proof}


\begin{lemma}\label{nug to mattila}
For any finite non-negative Borel measure $\mu$ supported on $E$,  
\begin{align*}
\int_{\OOd} \int_{\RR^d}  |\widehat{\nug}(\xi)|^2 d\xi d\g \approx 
 \int_{\RR^d}  |\widehat{\mu}(\xi)|^2 \int_{\prod_{i=1}^{\ell}S^{d_i-1}} 
  |\widehat{\mu}(|\xi^{(1)}|\theta^{(1)},\ldots,|\xi^{(\ell)}|\theta^{(\ell)})|^2   
 d\boldtheta d\xi.
\end{align*}
\end{lemma}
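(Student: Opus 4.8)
The plan is to compute $\widehat{\nug}$ in closed form and then use the fact that the orbit map sends normalized Haar measure on each $\OO(d_i)$ to the uniform measure on the corresponding sphere; this turns the group average into the spherical average appearing on the right-hand side. In fact the argument will produce an exact identity, the $\approx$ in the statement absorbing only harmless normalization constants.

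First I would expand the definition of $\nug$. For $\xi = (\xi^{(1)},\ldots,\xi^{(\ell)}) \in \RR^d$ with $\xi^{(i)} \in \RR^{d_i}$, writing $\widehat{\nug}(\xi) = \int e^{-2\pi i \xi \cdot z}\, d\nug(z)$ and separating the $x$ and $y$ variables gives
$$
\widehat{\nug}(\xi) = \int_{\RR^d} e^{-2\pi i \sum_i \xi^{(i)} \cdot x^{(i)}} d\mu(x) \cdot \int_{\RR^d} e^{2\pi i \sum_i \xi^{(i)} \cdot g^{(i)} y^{(i)}} d\mu(y).
$$
Since each $g^{(i)} \in \OO(d_i)$ is orthogonal, $\xi^{(i)} \cdot g^{(i)} y^{(i)} = (g^{(i)})^{-1}\xi^{(i)} \cdot y^{(i)}$, so the second factor is $\overline{\widehat{\mu}(\g^{-1}\xi)}$, where I write $\g^{-1}\xi := \left((g^{(1)})^{-1}\xi^{(1)},\ldots,(g^{(\ell)})^{-1}\xi^{(\ell)}\right)$. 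Hence $\widehat{\nug}(\xi) = \widehat{\mu}(\xi)\,\overline{\widehat{\mu}(\g^{-1}\xi)}$, and therefore $|\widehat{\nug}(\xi)|^2 = |\widehat{\mu}(\xi)|^2\, |\widehat{\mu}(\g^{-1}\xi)|^2$.

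Next, since the integrand is non-negative, Tonelli's theorem permits interchanging the order of integration and integrating over $\g$ first:
$$
\int_{\OOd} \int_{\RR^d} |\widehat{\nug}(\xi)|^2 \, d\xi \, d\g = \int_{\RR^d} |\widehat{\mu}(\xi)|^2 \left( \int_{\OOd} |\widehat{\mu}(\g^{-1}\xi)|^2 \, d\g \right) d\xi.
$$
The final step is to evaluate the inner group integral. Because $d\g$ is the product of normalized Haar measures and the map $g^{(i)} \mapsto (g^{(i)})^{-1}\xi^{(i)}$ pushes Haar measure on $\OO(d_i)$ forward to the uniform probability measure on the sphere of radius $|\xi^{(i)}|$ (equivalently, $(g^{(i)})^{-1}\xi^{(i)} = |\xi^{(i)}|\theta^{(i)}$ with $\theta^{(i)}$ uniform on $S^{d_i-1}$), the product structure yields
$$
\int_{\OOd} |\widehat{\mu}(\g^{-1}\xi)|^2 \, d\g = \int_{\prod_{i=1}^{\ell} S^{d_i-1}} |\widehat{\mu}(|\xi^{(1)}|\theta^{(1)},\ldots,|\xi^{(\ell)}|\theta^{(\ell)})|^2 \, d\boldtheta,
$$
which is exactly the claimed identity.

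There is essentially no serious obstacle: the lemma is an exact computation rather than an estimate. The only points requiring care are the justification for factoring the double integral and for interchanging the order of integration (both covered by non-negativity of the integrand together with finiteness of $\mu$), and the precise assertion that the orbit map $g^{(i)} \mapsto (g^{(i)})^{-1}\xi^{(i)}$ intertwines Haar measure with the uniform measure on $S^{d_i-1}$ on each factor. This last fact is the same measure-pushforward property already invoked in the proof of Lemma \ref{nu to nug}, and it is precisely what converts the group average into the spherical average.
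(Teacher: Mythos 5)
Your proof is correct and follows essentially the same route as the paper: factor $\widehat{\nug}(\xi)$ as $\widehat{\mu}(\xi)$ times a rotated copy of $\overline{\widehat{\mu}}$ (the paper writes this as $\widehat{\mu}(-(g^{(1)})^{T}\xi^{(1)},\ldots,-(g^{(\ell)})^{T}\xi^{(\ell)})$, which is the same thing), apply Tonelli, and convert the $\OOd$-average into the spherical average. The only cosmetic difference is in the last step, where the paper invokes the quotient integral formula for $\OO(d_i)/\Stab(e_{d_i}) \cong S^{d_i-1}$ while you directly use the pushforward of Haar measure under the orbit map (justified by inversion/right-invariance of Haar measure and uniqueness of the rotation-invariant probability measure on the sphere) -- both rest on the identical fact and yield the same exact identity.
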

\begin{proof}
By the definition of $\nug$, we have 
$$
\widehat{\nug}(\xi) = \widehat{\mu}(\xi) \widehat{\mu}(-(g^{(1)})^{T} \xi^{(1)},\ldots,-(g^{(\ell)})^{T} \xi^{(\ell)}),
$$
where $T$ indicates transpose. Therefore 
\begin{align*}
\int_{\RR^d} \int_{\OOd} |\widehat{\nug}(\xi)|^2 d\g d\xi 
= 
\int_{\RR^d} |\widehat{\mu}(\xi)|^2 \int_{\OOd} |\widehat{\mu}(g^{(1)} \xi^{(1)},\ldots,g^{(\ell)} \xi^{(\ell)})|^2 d\g d\xi.
\end{align*}
We now consider the inner integral on the right for fixed non-zero $\xi \in \RR^d$. 
By a change of variable and the translation invariance of the Haar measures, 
\begin{align*}
\int_{\OOd} |\widehat{\mu}(g^{(1)} \xi^{(1)},\ldots,g^{(\ell)} \xi^{(\ell)})|^2 d\g 
=
\int_{\OOd} 
|\widehat{\mu}(g^{(1)} e_{d_1} |\xi^{(1)}|,\ldots,g^{(\ell)} e_{d_{\ell}}|\xi^{(\ell)})|^2 
d\g, 
\end{align*}
where $e_{d_i} = (0,\ldots,0,1) \in \RR^{d_i}$. The stabilizer subgroup of $\OO(d_i)$ for $e_{d_i}$ is $\stab(e_{d_i}) = \cbr{g \in \OO(d_i) : ge_{d_i} = e_{d_i}}$. 
As $\OO(d_i)$ is compact and $\stab(e_{d_i})$ is closed, $\stab(e_{d_i})$ is compact. 
We equip $\stab(e_{d_i})$ with its normalized Haar measure. The quotient space $\OO(d_i)/\stab(e_{d_i})$ is homeomorphic to the sphere $S^{d_i - 1}$. The measure on $\OO(d_i)/\stab(e_{d_i})$ is the image of the uniform probability measure on $S^{d_i - 1}$; it is a left-invariant Radon measure.  
Putting all this together, by the quotient integral formula (see, for example, \cite{DE08}, \cite{Folland94}), the last integral above equals a constant multiple of 
\begin{gather*}
\int_{\prod_{i=1}^{\ell} \OO(d_i)/\stab(e_{d_i})} \int_{\prod_{i=1}^{\ell} \stab(e_{d_i}) } |\widehat{\mu}(g^{(1)} h^{(1)} e_{d_1} |\xi^{(1)}|,\ldots,g^{(\ell)} h^{(\ell)} e_{d_{\ell}}|\xi^{(\ell)}|)|^2 d\h d\g
\\
= 
\int_{\prod_{i=1}^{\ell} \OO(d_i)/\stab(e_{d_i})} \int_{\prod_{i=1}^{\ell} \stab(e_{d_i}) } |\widehat{\mu}(g^{(1)}  e_{d_1} |\xi^{(1)}|,\ldots,g^{(\ell)}  e_{d_{\ell}}|\xi^{(\ell)}|)|^2 d\h d\g
\\
= 
\int_{\prod_{i=1}^{\ell} \OO(d_i)/\stab(e_{d_i})}  |\widehat{\mu}(g^{(1)}  e_{d_1} |\xi^{(1)}|,\ldots,g^{(\ell)}  e_{d_{\ell}}|\xi^{(\ell)}|)|^2 d\g
\\
=
\int_{\prod_{i=1}^{\ell} S^{d_i - 1}}  |\widehat{\mu}( |\xi^{(1)}| \theta^{(1)} ,\ldots, |\xi^{(\ell)}| \theta^{(\ell)} )|^2  d\boldtheta.
\end{gather*}



\end{proof}

\subsection{Estimating the Multi-Parameter Mattila Integral}

\begin{lemma}\label{est of mattila} If $$\dim(E) > d-\frac{\min d_i}{2}+\frac{1}{3},$$ then there exists a finite non-negative Borel measure $\mu$ supported on $E$ satisfying 
\begin{align}\label{mattila integral}
 \int_{\R^d}  |\widehat{\mu}(\xi)|^2  \int_{\prod_{i=1}^{\ell}S^{d_i-1}} |\widehat{\mu}( |\xi^{(1)}|\theta^{(1)},\ldots,|\xi^{(\ell)}|\theta^{(\ell)} )|^2   d\boldtheta d\xi < \infty. 
\end{align} 
\end{lemma}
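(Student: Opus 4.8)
The plan is to produce $\mu$ as a Frostman measure of suitably high dimension and then reduce the multi-parameter integral \eqref{mattila integral} to a single-parameter spherical average estimate in the block of \emph{minimal} dimension, where the hypothesis \eqref{dim hypothesis} is exactly calibrated to the Wolff--Erdo\~{g}an threshold. Write $m=\min_{1\le i\le \ell}d_i$. By \eqref{dim hypothesis} I would first fix $s$ with $d-\tfrac{m}{2}+\tfrac13<s<\dim(E)$ and, by Frostman's lemma, a compactly supported probability measure $\mu$ on $E$ with $\mu(B(x,r))\lesssim r^{s}$, equivalently $\int_{\R^d}|\widehat{\mu}(\xi)|^2|\xi|^{t-d}\,d\xi\approx I_t(\mu)<\infty$ for all $t<s$. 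The entire argument hinges on the elementary bookkeeping
$$
s>d-\frac{m}{2}+\frac13 \iff s_0:=s-(d-m)>\frac{m}{2}+\frac13,
$$
so that the \emph{effective dimension} $s_0$ seen inside the block of dimension $m$ sits strictly above the single-parameter Falconer threshold in $\RR^m$.

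Next I would pass to block-polar coordinates $\xi^{(i)}=R_i\theta^{(i)}$, which turns \eqref{mattila integral} into a constant multiple of
$$
\int_{(0,\infty)^{\ell}} S(\mathbf{R})^2\prod_{i=1}^{\ell}R_i^{d_i-1}\,d\mathbf{R},\qquad S(\mathbf{R})=\int_{\prod_{i=1}^{\ell}S^{d_i-1}}\big|\widehat{\mu}\big(R_1\theta^{(1)},\ldots,R_{\ell}\theta^{(\ell)}\big)\big|^2\,d\boldtheta .
$$
I would single out the block $i_0$ with $d_{i_0}=m$ and carry out the $S^{m-1}$-integration first (the \emph{slicing}). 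With the remaining frequencies frozen at $\zeta=(R_j\theta^{(j)})_{j\ne i_0}\in\RR^{d-m}$, the map $\eta\mapsto\widehat{\mu}(\ldots,\eta,\ldots)$ is the Fourier transform of the complex measure $\mu_\zeta$ on $\RR^{m}$ obtained by pushing $e^{-2\pi i\zeta\cdot x'}\,d\mu(x)$ forward under the projection $x\mapsto x^{(i_0)}$, where $x'$ denotes the complementary coordinates. Its total variation is dominated by $\pi_{i_0*}\mu$, uniformly in $\zeta$, and Marstrand-type slicing makes $\mu_\zeta$ behave like an $s_0$-dimensional measure. Applying the Wolff--Erdo\~{g}an spherical average bound \cite{W99,Erd05} in $\RR^m$ with exponent $s_0$ to each of the two copies of $S$ then produces a factor $R_{i_0}^{-2\beta_m(s_0)}$, where the decay exponent $\beta_m(s_0)$ exceeds $m/2$ precisely because $s_0>m/2+1/3$.

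To assemble the pieces, the gain $\beta_m(s_0)>m/2$ is exactly what makes the radial integral $\int_1^{\infty}R_{i_0}^{-2\beta_m(s_0)+m-1}\,dR_{i_0}$ converge, while the region $R_{i_0}\lesssim 1$ is harmless since $\widehat{\mu}$ is bounded. After extracting this decay, what remains is an integral of the slice energies over the complementary frequency space $\RR^{d-m}$; by Fourier inversion in those $d-m$ variables this collapses onto the diagonal $x'=y'$ and is comparable to a restricted $s_0$-energy of $\mu$ in the minimal block, namely a mollified version of $\int\!\int|x^{(i_0)}-y^{(i_0)}|^{-s_0}\,d\mu(x)\,d\mu(y)$ concentrated on $x'=y'$. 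This quantity is finite precisely because $s_0+(d-m)<s$ and $\mu$ has finite $s$-energy. Combining the radial decay with this finite complementary energy yields \eqref{mattila integral}.

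The step I expect to be the main obstacle is the slicing estimate itself. Because the slice objects $\mu_\zeta$ are complex measures, one must justify applying the Wolff--Erdo\~{g}an estimate to them, controlling their energy by that of the projection $\pi_{i_0*}\mu$ uniformly in $\zeta$; and, more delicately, one must make rigorous the passage from the integral of slice energies over $\RR^{d-m}$ to the restricted $s_0$-energy of $\mu$ — the Fourier-analytic incarnation of Marstrand's slicing theorem, which is exactly where the inequality $s_0<s-(d-m)$ is consumed and forces the choice $m/2+1/3<s_0<s-(d-m)$. By contrast, the block-polar reduction, the splitting at $R_{i_0}\approx 1$, and the remaining radial and angular integrations are routine once the slicing bound is available.
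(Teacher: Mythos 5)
Your strategy departs from the paper's (which iterates the Wolff--Erdo\~{g}an bound in \emph{every} block and then uses a multi-parameter Frostman estimate), but it contains a fatal quantitative error at its center. You apply the Wolff--Erdo\~{g}an bound to \emph{both} copies of the spherical average and claim the resulting decay exponent satisfies $\beta_m(s_0)>m/2$ because $s_0>m/2+1/3$. This is arithmetically impossible: the Wolff--Erdo\~{g}an exponent is $\beta_m(\alpha)=\frac{m+2\alpha-2}{4}$, valid only for $\frac{m}{2}\le\alpha\le\frac{m+2}{2}$, so $\beta_m(\alpha)>\frac{m}{2}$ would force $\alpha>\frac{m}{2}+1$, outside the admissible range (and finiteness of the slice energy additionally forces $\alpha<s_0$, which for $s_0$ near $m/2+1/3$ is well below the endpoint). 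Even at the endpoint $\alpha=\frac{m+2}{2}$ one gets exactly $\beta_m=\frac{m}{2}$, and then $\int_1^\infty R_{i_0}^{-2\beta_m+m-1}\,dR_{i_0}=\int_1^\infty R_{i_0}^{-1}\,dR_{i_0}$ diverges. So the symmetric use of the decay can never close, regardless of $s_0$: the constraint on $\alpha$ caps the gain at $R_{i_0}^{-m}$ against a Jacobian $R_{i_0}^{m-1}$. The threshold $\frac{m}{2}+\frac13$ enters through a different, \emph{asymmetric} mechanism, which is what the paper exploits: apply Lemma \ref{wolffest} to only one copy of the spherical average, and integrate the resulting weight $|\xi^{(i)}|^{-\beta}$ against the other copy $|\widehat{\mu}(\xi)|^2$; that integral is itself an energy, finite when roughly $d_i-\beta$ is below the effective dimension, and combining this with $\alpha$ below the effective dimension and optimizing in $\alpha$ is exactly where $\frac{d_i}{2}+\frac13$ comes from.

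There is a second gap in the treatment of the non-minimal blocks. Even if the $R_{i_0}$ integral converged, what remains is not ``an integral of slice energies that collapses onto the diagonal by Fourier inversion''; it is itself a multi-parameter Mattila-type integral, still containing the \emph{squared} spherical averages over $S^{d_j-1}$, $j\ne i_0$, against the Jacobians $R_j^{d_j-1}$, and its finiteness is exactly as nontrivial as the original problem (recall that $\int_0^\infty\bigl(\int_{S^{n-1}}|\widehat{\mu}(t\theta)|^2\,d\theta\bigr)^2 t^{n-1}\,dt$ is the Mattila integral; it is never automatically finite for a measure). The paper resolves this by iterating Lemma \ref{wolffest} in every block --- this is why Theorem \ref{main} assumes $d_i\ge 2$ for \emph{all} $i$, not just the minimal one --- applied to Fourier-slice measures, arriving at a product of two multi-parameter weighted integrals of $|\widehat{\mu}|^2$; these are then converted to space-side energies and controlled by Lemma \ref{frost} (which is the rigorous form of your ``effective dimension $s-(d-d_i)$'' heuristic) together with a dyadic decomposition. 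Note also that no uniform-in-$\zeta$ control of the slice energies is available or needed: the paper carries them along and integrates them out at the end. One point you flag is genuine and shared by the paper: the slice measures are complex, so one needs the Fourier-side form of the Wolff--Erdo\~{g}an bound, $\int_{S^{n-1}}|\widehat{\lambda}(t\theta)|^2\,d\theta\lesssim_\epsilon t^{-\frac{n+2\alpha-2}{4}+\epsilon}\int|\widehat{\lambda}(\xi)|^2|\xi|^{\alpha-n}\,d\xi$, which holds for complex measures since the proofs do not use positivity.
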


\noindent 
For the proof of Lemma \ref{est of mattila}, we need two lemmas. 
The first is an estimate for the $L^2$ spherical average of the Fourier transform due to Wolff \cite{W99} ($n=2$) and Erdo\~{g}an \cite{Erd05} ($n \geq 3$).
\begin{lemma}\label{wolffest} Let $\lambda$ be a finite compactly supported Borel measure on $\R^n$. If $t,\epsilon>0$ and 
$$
\frac{n}{2}\leq \alpha \leq \frac{n+2}{2},
$$ then 
\begin{equation*}
\int_{S^{n-1}} |\widehat{\lambda}(t \theta )|^2 d \theta \leq C_{\epsilon} t^{ -\frac{ n+2\alpha-2}{ 4}+\epsilon} I_{\alpha}(\lambda), 
\end{equation*}
where 
\begin{align}\label{energy integral}
I_{\alpha}(\lambda)
=
\int_{\R^{n}}\int_{\R^{n}}|x-y|^{-\alpha} d\lambda(x)d\lambda(y) = C_{n,\alpha} \int_{\R^n} |\widehat{\lambda}(\xi)|^2 |\xi|^{-n+\alpha} d\xi.
\end{align}
\end{lemma}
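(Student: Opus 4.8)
The plan is to recognize the statement as the sharp $L^2$ spherical-average decay theorem of Wolff (for $n=2$) and Erdoğan (for $n\ge 3$), so that the most efficient route in this paper is to quote it directly from \cite{W99} and \cite{Erd05}; what follows is a sketch of the genuine content one reproduces.

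First I would pass to the surface-measure form of the average. Writing $\sigma$ for the measure $d\theta$ on $S^{n-1}$, expanding the square, and applying Fubini gives
\[
\int_{S^{n-1}} |\widehat{\lambda}(t\theta)|^2\, d\theta = \int_{\R^n}\int_{\R^n} \widehat{\sigma}\big(t(x-y)\big)\, d\lambda(x)\, d\lambda(y),
\]
which recasts the problem as controlling a bilinear form in $\lambda$ with kernel $\widehat{\sigma}(t\,\cdot\,)$. Stationary phase expands $\widehat{\sigma}(\xi)$ as $|\xi|^{-(n-1)/2}$ times a sum of oscillatory factors $e^{\pm 2\pi i|\xi|}$ against symbols of order $0$. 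Using only the size bound $|\widehat{\sigma}(\xi)|\lesssim (1+|\xi|)^{-(n-1)/2}$ together with the compact support of $\lambda$ (so that $I_{(n-1)/2}(\lambda)\lesssim I_\alpha(\lambda)$) yields the trivial Mattila bound with decay exponent $(n-1)/2$; this matches the target at the endpoint $\alpha=n/2$ but is strictly too weak once $\alpha>n/2$, because it discards the curvature gain encoded in the oscillation $e^{\pm 2\pi i t|x-y|}$.

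The heart of the matter is to recover that gain. I would decompose the $(x,y)$ integration dyadically in $|x-y|$, and correspondingly split the frequency variable into annuli $|\xi|\sim R$; on each annulus the oscillatory bilinear integral is governed by the adjoint restriction (extension) operator of the sphere. The essential input is a bilinear restriction estimate encoding the curvature of $S^{n-1}$ — Tao's bilinear theorem for the paraboloid transferred to the sphere for $n\ge 3$, and Wolff's bilinear cone estimate for $n=2$ — which bounds products of extensions on transverse caps in $L^q$ for some $q$ strictly below the trivially available exponent. Feeding this gain back through the energy identity \eqref{energy integral}, which converts the weighted $L^2$ norm of $\widehat{\lambda}$ into $I_\alpha(\lambda)$, upgrades the decay from $(n-1)/2$ to the claimed $\tfrac{n+2\alpha-2}{4}$, uniformly for $\tfrac n2\le \alpha\le \tfrac{n+2}{2}$; the arbitrarily small loss $t^{\epsilon}$ arises from summing the dyadic scales.

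The main obstacle is exactly this bilinear restriction/extension estimate. It is the deep harmonic-analytic ingredient, established by induction on scales through wave-packet decompositions and the Kakeya-type geometry of the sphere, and a self-contained proof is paper-length; this is why the pragmatic and standard choice here is to carry out only the elementary reduction to the surface-measure form, check the endpoint arithmetic ($\tfrac{n+2\alpha-2}{4}$ equals $\tfrac{n-1}{2}$ at $\alpha=\tfrac n2$ and $\tfrac n2$ at $\alpha=\tfrac{n+2}{2}$), and cite \cite{W99} and \cite{Erd05} for the estimate itself.
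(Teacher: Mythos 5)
The paper offers no proof of this lemma at all: it is quoted verbatim as the known $L^2$ spherical-average decay estimate of Wolff ($n=2$) and Erdo\~{g}an ($n\geq 3$), exactly as you propose, so your approach matches the paper's. Your endpoint arithmetic is correct and your sketch of the underlying bilinear-restriction machinery is accurate in substance (one small attribution quibble: Wolff's $n=2$ proof in \cite{W99} rests on his combinatorial work on circle tangencies rather than on his later bilinear cone estimate), but since the content is cited rather than reproved, nothing further is required.
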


\noindent 
The second lemma that we need 
examines the behavior of Frostman-type measures on Cartesian products of differently-sized balls from coordinate hyperplanes. Here we let $B^d_{\delta}(x)$ denote the ball in $\R^d$ of radius $\delta$ centered at $x$, while we let $R^d_{\delta}(x)$ denote the box $x+[-\delta,\delta]^d \subseteq \R^d$.

\begin{lemma}\label{frost}  Suppose $0< s \leq d$ and $\mu$ is a finite Borel measure on $\RR^d$ satisfying 
$$
\mu\left(B^d_{\delta}(x)\right) \lesssim \delta^s
$$ 
for all $x\in \R^d$ and $\delta > 0$. 
If $x=(x^{(1)},\dots,x^{(\ell)})\in \R^d$ and $\delta_1, \dots, \delta_{\ell} > 0$ 
with $\delta_j \leq \delta_i$ for all $1\leq i \leq \ell$, 
then 
$$
\mu\left( B^{d_1}_{\delta_1}(x^{(1)})\times \cdots \times  B^{d_{\ell}}_{\delta_{\ell}}(x^{(\ell)}) \right) \lesssim \delta_j^{s-(d-d_j)}\prod_{i\neq j}\delta_i^{d_i}. 
$$ 
In particular, if $x=(x^{(1)},\dots,x^{(\ell)})\in \R^d$ and $0 < \delta_1,\dots, \delta_{\ell} \leq 1$, then   
$$
\mu\left( B^{d_1}_{\delta_1}(x^{(1)})\times \cdots \times  B^{d_{\ell}}_{\delta_{\ell}}(x^{(\ell)}) \right) \lesssim \prod_{i=1}^{\ell} \delta_i^{s-(d-d_i)}. 
$$
\end{lemma}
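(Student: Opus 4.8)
The plan is to deduce the anisotropic estimate from the isotropic Frostman hypothesis by covering the product
\begin{align*}
P := B^{d_1}_{\delta_1}(x^{(1)}) \times \cdots \times B^{d_{\ell}}_{\delta_{\ell}}(x^{(\ell)})
\end{align*}
with sets of a single scale, namely the smallest radius $\delta_j$. First I would lay down a grid of axis-parallel cubes of side $\approx \delta_j$ in $\R^d = \R^{d_1} \times \cdots \times \R^{d_{\ell}}$ and count how many meet $P$. Since the $i$-th factor $B^{d_i}_{\delta_i}(x^{(i)})$ sits in $\R^{d_i}$ and has radius $\delta_i \geq \delta_j$, it is met by $\approx (\delta_i/\delta_j)^{d_i}$ such cubes, while the $j$-th factor needs only $O(1)$ of them because its radius already equals $\delta_j$. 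Taking the product of these counts over $i$ and using $\sum_{i \neq j} d_i = d - d_j$, the number of grid cubes meeting $P$ is
\begin{align*}
N \approx \prod_{i \neq j} \left(\frac{\delta_i}{\delta_j}\right)^{d_i} = \delta_j^{-(d-d_j)} \prod_{i \neq j} \delta_i^{d_i}.
\end{align*}

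Next, each grid cube has side $\approx \delta_j$ and so is contained in a Euclidean ball $B^d_{C\delta_j}(y)$ with $C = \sqrt{d}$ (after adjusting the center), whence the hypothesis gives $\mu(\text{cube}) \lesssim (C\delta_j)^s \approx \delta_j^s$. Summing this over the $N$ covering cubes yields
\begin{align*}
\mu(P) \leq \sum_{\text{cubes}} \mu(\text{cube}) \lesssim N \delta_j^s = \delta_j^{\,s-(d-d_j)} \prod_{i \neq j} \delta_i^{d_i},
\end{align*}
which is exactly the first asserted bound.

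For the ``in particular'' statement, I would choose $j$ so that $\delta_j = \min_i \delta_i$ and invoke the bound just proved. It then remains to check that $\delta_j^{\,s-(d-d_j)} \prod_{i \neq j} \delta_i^{d_i} \lesssim \prod_{i=1}^{\ell} \delta_i^{\,s-(d-d_i)}$; cancelling the common factor $\delta_j^{\,s-(d-d_j)}$, this reduces to the factor-by-factor comparison $\delta_i^{d_i} \leq \delta_i^{\,s-(d-d_i)}$ for each $i \neq j$. Since $0 < \delta_i \leq 1$, this inequality is equivalent to the exponent comparison $d_i \geq s-(d-d_i)$, i.e. $s \leq d$, which holds by hypothesis.

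The argument is essentially one anisotropic covering, so I do not anticipate a deep obstacle. The only points demanding care are getting the covering count $N$ right (the anisotropy means each factor contributes at a different rate, while the smallest factor contributes a bounded number), and verifying that passing from the single-scale form to the product-of-scales form in the second assertion costs nothing precisely because $s \leq d$ and all $\delta_i \leq 1$. I would also be careful that the constant $C$ incurred in passing between cubes and balls is absorbed into $\lesssim$ and is independent of the $\delta_i$ and of $x$.
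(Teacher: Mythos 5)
Your proof is correct and takes essentially the same approach as the paper: both arguments cover the anisotropic product by cubes (boxes) of the smallest scale $\delta_j$, of which $\approx \prod_{i\neq j}\lceil \delta_i/\delta_j\rceil^{d_i}$ are needed, and then apply the Frostman hypothesis $\mu(\text{cube}) \lesssim \delta_j^s$ to each piece. Your explicit verification of the ``in particular'' statement (reducing it to $d_i \geq s-(d-d_i)$, i.e.\ $s \leq d$, via $\delta_i \leq 1$) merely spells out what the paper compresses into ``and the lemma follows.''
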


\begin{proof} For technical ease, we proceed using boxes instead of balls, noting that the results are equivalent. Fixing $x\in \R^d$ and $\delta_1,\dots, \delta_{\ell} > 0$ with  $\delta_j\leq \delta_i$ for all $1\leq i \leq \ell$, we see that 
$R=R^{d_1}_{\delta_1}(x^{(1)})\times \cdots \times  R^{d_{\ell}}_{\delta_{\ell}}(x^{(\ell)})$ 
is precisely obtained by stretching $R^d_{\delta_j}(x)$ by a factor of $\delta_i/\delta_j$ in each hyperplane, so in particular $R$ is contained in $\prod_{i\neq j} \lceil \delta_i/\delta_1 \rceil^{d_i}$ translated copies of $R^d_{\delta_j}(x)$. 

\noindent Therefore, 
$$\mu(R) \lesssim \delta_j^s\prod_{i\neq j} \lceil \delta_i/\delta_j \rceil^{d_i},  $$ 
and the lemma follows.
\end{proof}

\begin{proof}[Proof of Lemma \ref{est of mattila}] 
Given the hypotheses of the lemma, we let $s=\dim(E)$, we define $\epsilon>0$ by $$4\epsilon = s-\left(d-\frac{\min d_i}{2}+\frac{1}{3}\right), $$ and we let $\mu$ be any finite non-negative Borel measure supported on $E$ satisfying 
\begin{equation}\label{ballcond}
\mu\left(B^d_{\delta}(x)\right) \lesssim \delta^{s-\epsilon}  
\end{equation} 
for all $x \in \RR^d$ and $\delta > 0.$ 
The existence of $\mu$ is guaranteed by Frostman's Lemma (see, for example, \cite{Falc86II}, \cite{M15}). 
We will estimate the integral in \eqref{mattila integral} by iteratively applying Lemma \ref{wolffest} to ``Fourier slice" measures $\lambda_i$ on $\R^{d_i}$, defined for fixed \  $\xi^{(1)},\dots,\xi^{(i-1)},$ $\xi^{(i+1)},\dots ,\xi^{(\ell)}$ by $$\widehat{\lambda_i}(\xi^{(i)})=\widehat{\mu}\left(\xi^{(1)},\dots,\xi^{(\ell)}\right).$$
Indeed, the integral in \eqref{mattila integral} is
\begin{gather*} 
\int_{\R^d} |\widehat{\mu}(\xi)|^2 \int_{\prod_{i=2}^{\ell}S^{d_i-1}}
\left(\int_{S^{d_1-1}} 
|\widehat{\mu}( |\xi^{(1)}|\theta^{(1)}, \dots ,|\xi^{(\ell)}|\theta^{(\ell)} ) |^2   d\theta^{(1)} \right)  d\theta^{(2)}\cdots d\theta^{(\ell)} d\xi
\\
\lesssim  
\int_{\R^d}  |\widehat{\mu}(\xi)|^2 \int_{\prod_{i=2}^{\ell}S^{d_i-1}} 
|\xi^{(1)}|^{-\frac{d_1+2\alpha_1-2}{4}+\epsilon} 
\left(
\int_{\R^{d_1}} |\widehat{\mu}( \eta^{(1)}, \dots ,|\xi^{(\ell)}|\theta^{(\ell)} )|^2 
|\eta^{(1)}|^{-d_1+\alpha_1}  d \eta^{(1)}
\right) 
d\theta^{(2)}\cdots d\theta^{(\ell)} d\xi  
\\ \vdots \\
\lesssim  
\left( \int_{\R^d} |\widehat{\mu}(\xi)|^2 \prod_{i=1}^{\ell} |\xi^{(i)} |^{-\frac{d_i+2\alpha_i-2}{4}+\epsilon} d \xi\right) \left( \int_{\R^d} |\widehat{\mu}(\eta)|^2  \prod_{i=1}^{\ell} |\eta^{(i)} |^{-d_i+\alpha_i} d \eta \right),
\end{gather*} 
\noindent provided that $\frac{d_i}{2}\leq \alpha_i \leq \frac{d_i+2}{2}$ for all $1\leq i \leq \ell$. 
Expressing the integrals from the last line 
on the space side (using the ``Fourier slice'' measures and  \eqref{energy integral}),  
we obtain a constant multiple of 
\begin{align*}
\rbr{
\int_{\R^d}\int_{\R^d} \prod_{i=1}^{\ell}|x^{(i)}-y^{(i)}|^{\frac{d_i+2\alpha_i-2}{4}-\epsilon-d_i} d \mu(x) d \mu(y)
}
\rbr{
\int_{\R^d}\int_{\R^d} \prod_{i=1}^{\ell} |x^{(i)}-y^{(i)}|^{-\alpha_i} d \mu(x) d \mu(y)
}.
\end{align*}

\noindent By decomposing dyadically into regions where 
$2^{-j_i}\leq |x^{(i)}-y^{(i)}| \leq 2^{-j_i+1}$ 
and then applying Lemma \ref{frost} and \eqref{ballcond}, we see that 
convergence of 
the integrals 
is implied by convergence of the sums 
\begin{align*}
\prod_{i=1}^{\ell} \sum_{j_i=0}^{\infty} 2^{-j_i\left(\frac{d_i+2\alpha_i-2}{4}-d_i+s-(d-d_i)-2\epsilon\right)} 
\qquad \text{ and } \qquad
\prod_{i=1}^{\ell}\sum_{j_i=0}^{\infty}2^{-j_i\left(-\alpha_i+s-(d-d_i)-\epsilon\right)}.
\end{align*}

\noindent 
Convergence of 
the former sum is equivalent to 
$\frac{d_i+2\alpha_i-2}{4}+s-(d-d_i)>d_i+2\epsilon $ 
for all $1\leq i \leq \ell$.
Convergence of 
the latter sum is equivalent to $\alpha_i< s-(d-d_i)-\epsilon$ for all $1\leq i \leq \ell$. 
Recalling the definition of $\epsilon$ and the requirement  that $\frac{d_i}{2}\leq \alpha_i\leq \frac{d_i+2}{2}$ 
for all 
$1\leq i \leq \ell$,  
we see that all inequalities can be satisfied by setting $$\alpha_i= \min \left\{s-(d-d_i)-2\epsilon,\frac{d_i+2}{2} \right\}.$$
\end{proof}

\vskip.125in 

\end{document}